\long\def\@savemarbox#1#2{\global\setbox#1\vtop{\hsize\marginparwidth 
  \@parboxrestore\tiny\raggedright #2}}
\renewcommand*{\backref}[1]{}
\renewcommand*{\backrefalt}[4]{
  \ifcase #1
  [No citations.]
  \or [#2]
  \else [#2]
  \fi }
   \def\MR#1{}
\newtheorem{theorem}{Theorem}[section]
\newtheorem{lemma}[theorem]{Lemma}
\newtheorem{proposition}[theorem]{Proposition}
\theoremstyle{definition}
\newtheorem{definition}[theorem]{Definition}
\theoremstyle{remark}
\newtheorem{remark}[theorem]{Remark}
\numberwithin{equation}{section}
\newcommand{\ts}{\hspace{.11111em}}
\newcommand{\tts}{\hspace{.05555em}}
\DeclareMathOperator{\G}{\operatorname{\mathcal{G}}}  
\DeclareMathOperator{\vol}{\operatorname{\mathsf{Vol}}\tts}
\DeclareMathOperator{\inj}{\operatorname{\mathsf{Inj}}\tts} 
\DeclareMathOperator{\emb}{\operatorname{\mathsf{Emb}}\tts} 
\DeclareMathOperator{\hyp}{\operatorname{\mathsf{hyp} }\tts}  
\newcommand{\RP}{\mathbb{R}\mathbb{P}}
\DeclareMathOperator{\sys}{\operatorname{\mathsf{Sys}}\tts}
\DeclareMathOperator{\SR}{\operatorname{\mathsf{SR}}\tts}
\begin{document} 

\baselineskip.525cm

\title[Triangulation Complexity and Systolic Volume]{Triangulation complexity and systolic volume of hyperbolic manifolds}

\author[L.~Chen]{L. Chen}

\address{
 \hspace*{0.055in}School of Mathematics and Statistics, Lanzhou University \newline
 \hspace*{0.175in} Lanzhou 730000, P.R. China 
}

\email{\hspace*{0.025in} chenzhmath@gmail.com}

\subjclass[2020]{Primary 57K31, Secondary 53C23.}

\keywords{Complexity, systolic volume, hyperbolic manifolds}

\date{\today}

\begin{abstract} 
Let $M$ be a closed $n$-manifold with nonzero simplicial volume. A central result in systolic geometry from Gromov is that systolic volume of $M$ is related to its simplicial volume. In this short note, we show that systolic volume of hyperbolic manifolds is related to triangulation complexity. The proof is based on J{\o}rgensen and Thurston's theorem of hyperbolic volume.
\end{abstract}

\maketitle

\section{Introduction}

In systolic geometry, the systole of a Riemannian manifold is defined to be the shortest length of a noncontractible loop. Gromov's systolic inequality implies that systole of many manifolds (including hyperbolic manifolds) is bounded from above by Riemannian volume. The optimal constant in systolic inequality is usually called systolic volume in literature. Let $M$ be a closed $n$-dimensional manifold with nonzero simplicial volume. Gromov~\cite[Section 6.4.D]{gromov_filling_riemannian_manifolds_1983} proved that systolic volume is related to the topological complexity of $M$. More explicitly, Gromov proved that systolic volume relates to simplicial volume and other topological invariants. In this paper, we verify that systolic volume of hyperbolic manifolds is related to triangulation complexity. Triangulation complexity of a closed manifold is defined to be the minimum number of simplices in a triangulation. Hence the triangulation complexity naturally represents how complicated a manifold is. Our result is a supplement of Gromov's work.

Let $M$ be a closed $3$-manifold. If $M$ is irreducible and not homeomorphic to $S^3, \RP^3$ or $L(3, 1)$, the triangulation complexity coincides with the complexity defined by Matveev \cite{matveev_complexity}. In the following, we use $c(M)$ to denote the triangulation complexity of a closed $3$-manifold $M$. We refer to \cite{jaco_rubinstein_tillmann_complexity_13, jaco_rubinstein_tillmann_complexity_20, lackenby_purcell_complexity_21} for recent developments of triangulation complexity of 3-manifolds. The systolic volume of $M$, denoted by $\SR(M)$, is defined to be
\begin{equation*}
 \inf_{\G} \frac{\vol_{\G}(M)}{\sys \pi_1(M, \G)^n} , 
\end{equation*}
where the infimum is taken over all Riemannian metrics $\G$ on $M$. The systolic volume $\SR(M)$ is positive if $M$ is a closed essential $3$-manifold, see \cite[Theorem 0.1.A.]{gromov_filling_riemannian_manifolds_1983}.

Let $s_0$ be any sufficiently large positive number. 
\begin{theorem} \label{thm_main}
 Suppose that $M$ is a closed hyperbolic $3$-manifold with $\SR(M) \leqslant s_0$. There exists a positive constant $C(s_0)$ only depending on $s_0$, such that 
 \begin{equation*} 
  \SR(M) \geqslant C(s_0) \ts \frac{c(M)}{\log^3{c(M)}} .
 \end{equation*}
\end{theorem}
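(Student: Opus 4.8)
The plan is to turn the hypothesis $\SR(M)\le s_0$ into a \emph{finiteness} statement and then choose $C(s_0)$ by hand on the resulting finite list of manifolds. In outline: Gromov's systolic inequality converts the bound on $\SR(M)$ into a bound on the hyperbolic volume of $M$; the J{\o}rgensen--Thurston theorem then implies that only finitely many such $M$ exist; and on a finite set of manifolds the asserted inequality is automatic once both sides are known to be positive.

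\emph{Step 1: bounding the volume.} I would first invoke Gromov's theorem relating systolic volume to simplicial volume \cite[Section 6.4.D]{gromov_filling_riemannian_manifolds_1983}: there is a universal constant $c_3>0$ such that $\SR(M)\ge c_3\,\|M\|/\log^3(2+\|M\|)$ for every closed $3$-manifold of nonzero simplicial volume. Since $M$ is hyperbolic, the Gromov--Thurston proportionality theorem gives $\|M\|=\vol(M)/v_3$, where $\vol(M)$ is the hyperbolic volume and $v_3$ the volume of the regular ideal tetrahedron. Because $t\mapsto t/\log^3(2+t)\to\infty$ as $t\to\infty$, the inequality $c_3\|M\|/\log^3(2+\|M\|)\le \SR(M)\le s_0$ confines $\|M\|$, and hence $\vol(M)$, to the interval $[0,V_0(s_0)]$ for some constant $V_0(s_0)$ depending only on $s_0$.

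\emph{Step 2: finiteness and conclusion.} By the J{\o}rgensen--Thurston theorem, only finitely many isometry classes of closed hyperbolic $3$-manifolds have volume at most $V_0(s_0)$; in particular the set of closed hyperbolic $3$-manifolds $M$ with $\SR(M)\le s_0$ is finite, say $\{M_1,\dots,M_N\}$ (it is nonempty exactly because $s_0$ is taken sufficiently large, which is what makes the statement non-vacuous). Each $M_i$ is aspherical, hence essential, so $\SR(M_i)>0$ by \cite[Theorem 0.1.A.]{gromov_filling_riemannian_manifolds_1983}; and $c(M_i)$ is a positive integer with $c(M_i)\ge 2$ (no closed hyperbolic $3$-manifold has complexity $0$ or $1$), so $\log^3 c(M_i)>0$. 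Then
\[
 C(s_0):=\min_{1\le i\le N}\ \frac{\SR(M_i)\,\log^3 c(M_i)}{c(M_i)}
\]
is a positive real depending only on $s_0$, and by construction $\SR(M_i)\ge C(s_0)\,c(M_i)/\log^3 c(M_i)$ for every $i$, which is the theorem.

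I do not foresee a serious obstacle: all the weight is carried by the two quoted inputs --- Gromov's systolic estimate, used to bound $\vol(M)$ in terms of $s_0$, and J{\o}rgensen--Thurston finiteness --- and the rest of the argument is soft. The only point to watch is that Gromov's estimate be cited in a form strong enough to bound the volume; any lower bound $\SR(M)\ge\psi(\|M\|)$ with $\psi(t)\to\infty$ suffices, so even weaker versions of his inequality would do. Note also that the power $\log^3$ in the conclusion is cosmetic here, chosen to mirror the shape of Gromov's inequality; the same argument yields the statement with $c(M)/\log^3 c(M)$ replaced by any fixed positive function of $c(M)$. Removing the hypothesis $\SR(M)\le s_0$ --- i.e.\ proving the inequality for \emph{all} closed hyperbolic $3$-manifolds --- is a genuinely harder problem: it would follow from a comparison $c(M)=O(\vol(M))$ (up to factors absorbed by the logarithms) together with Gromov's inequality and the monotonicity of $t\mapsto t/\log^3 t$, but that comparison is not supplied by the present method.
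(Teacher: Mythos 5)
There is a genuine gap in your Step 2. The claim that ``only finitely many isometry classes of closed hyperbolic $3$-manifolds have volume at most $V_0(s_0)$'' is false, and it is not what J{\o}rgensen--Thurston theory gives: in dimension $3$ the volume spectrum has limit points, and the infinitely many hyperbolic Dehn fillings on a single cusped hyperbolic $3$-manifold yield infinitely many pairwise non-isometric \emph{closed} hyperbolic $3$-manifolds whose volumes are all bounded above by the volume of the cusped manifold. Finiteness of hyperbolic $n$-manifolds with bounded volume is Wang's theorem and holds only for $n \geqslant 4$; the paper states this explicitly at the start of Section 3. Consequently your Step 1, which correctly bounds $\vol_{\hyp}(M)$ in terms of $s_0$ via Gromov's inequality, does not yield finiteness of $\{M : \SR(M) \leqslant s_0\}$, and the definition of $C(s_0)$ as a minimum over a finite list collapses. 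The gap is not cosmetic: since only finitely many $3$-manifolds admit a triangulation with at most $k$ tetrahedra, any infinite family of distinct closed hyperbolic $3$-manifolds with bounded volume has unbounded $c(M)$, so no argument that uses only a volume bound can control $c(M)$.

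The finiteness you want is true, but for a different reason: the paper invokes Sabourau's theorem (Theorem~\ref{Sabourau_07}), which asserts directly that only finitely many hyperbolic $3$-manifolds satisfy $\SR(M) \leqslant s_0$; this is a separate, nontrivial result precisely because in dimension $3$ it does not follow from a volume bound. Once that finiteness is granted, your ``minimum over the finite list'' conclusion is logically valid (and, as you observe, renders the $\log^3$ cosmetic). The paper instead uses the finiteness to extract a uniform lower bound $\delta_0(s_0)$ on the injectivity radius, feeds it into the J{\o}rgensen--Thurston triangulation estimate $c(M) \leqslant t \leqslant K(\delta_0) \vol_{\hyp}(M)$, and only then applies Gromov's inequality, which exhibits the constant through an explicit chain rather than by compactness. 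Your closing remark that the unconditional statement would require a comparison of the form $c(M) = O(\vol_{\hyp}(M))$ correctly identifies where the real difficulty lies; the Dehn-filling examples above show such a comparison fails without an injectivity radius hypothesis.
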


Simplicial volume $\| M \|$ of an oriented closed connected $n$-dimensional manifold $M$ is a homotopy invariant defined by Gromov~\cite{gromov_bounded_cohomology_1982}. The definition is
\begin{equation*}
 \| M \| = \left\{ \left. \sum_{i = 1}^k |r_i| \right| \sum_{i=1}^k r_i \sigma_i \, \text{represents the fundamental class} \, [M]  \right\},
\end{equation*}
where the infimum is taken over all real singular chains representing the fundamental class of $M$. Moreover, theorem of Gromov and Thurston implies that simplicial volume of hyperbolic $n$-manifolds $M$ ($n \geqslant 2$) satisfies
\begin{equation} \label{sv_hyp}
 \nu_n \ts \| M \| = \vol_{\hyp}(M) ,
\end{equation}
where $\nu_n$ is a positive constant only depending on $n$. Gromov indicated that systolic volume is related to simplicial volume. 
\begin{theorem}[Gromov, see {\cite[Section 6.4.D.]{gromov_filling_riemannian_manifolds_1983}} and {\cite[Section 3.C.3.]{gromov_systoles_notes_1996}}] \label{thm_Gromov_SR}
 Let $M$ be a closed $n$-dimensional manifold with non-zero simplicial volume. Then the systolic volume $\SR(M)$ of $M$ satisfies
\begin{equation} \label{Gromov}
 \| M \| \leqslant C_n \SR(M) \ts \log^n{ \left( C_n^{\prime} \ts \SR(M) \right)} ,
\end{equation}
  where $C_n$ and $C_n^{\prime}$ are two positive constants only depending on $n$. 
\end{theorem}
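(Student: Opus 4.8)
The plan is to prove the inequality in a normalized form and then transfer it back to $\SR(M)$. Since both $\| M \|$ and $\SR(M) = \inf_{\G} \vol_{\G}(M)/\sys\pi_1(M,\G)^n$ are scale invariant, I would first fix a metric $\G$ on $M$ nearly realizing the infimum and rescale it so that $\sys \pi_1(M, \G) = 1$; after this normalization $V := \vol_{\G}(M) \leqslant (1 + \varepsilon)\SR(M)$. Because $t \mapsto t \log^n(C_n' t)$ is increasing, it suffices to prove a bound of the shape $\| M \| \leqslant C_n \ts V \log^n(C_n' V)$ for every unit-systole metric and then let $\varepsilon \to 0$. The whole content is therefore geometric: turn a unit-systole metric of volume $V$ into a combinatorial model of the fundamental class whose size is $\asymp V \log^n V$.

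The model comes from a good cover. I would cover $M$ by metric balls $B(x_i,\rho)$ of a single radius $\rho < \tfrac12$, chosen from a maximal $\rho$-separated net so that the concentric balls $B(x_i,\rho/2)$ are disjoint. Since $2\rho < \sys$, each ball lifts homeomorphically to the universal cover—two overlapping lifts $\widetilde B(\widetilde x_i,\rho)$ and $\widetilde B(\gamma\widetilde x_i,\rho)$ would force $d(\widetilde x_i,\gamma\widetilde x_i) < 2\rho < \sys$, which is impossible—so the inclusions $\pi_1(B(x_i,\rho)) \to \pi_1(M)$ are trivial and the cover is amenable. Gromov's diffusion/vanishing mechanism for amenable covers then produces a map from $M$ to the nerve $\Ncal$ of the cover carrying the fundamental class, whence $\| M \|$ is bounded by the number of top-dimensional simplices of $\Ncal$, which is at most $N\cdot m^{\ts n}$, where $N$ is the number of balls and $m$ is the multiplicity. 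The problem is thereby reduced to estimating $N$ and $m$.

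To do so I would pass to the universal cover $(\widetilde M,\widetilde\G)$ and exploit that $\sys = 1$ means exactly $d(\widetilde x,\gamma\widetilde x) \geqslant 1$ for every $\widetilde x$ and every nontrivial deck transformation $\gamma$, so each $\pi_1$-orbit is $1$-separated. Comparing $\pi_1$-translates of a fundamental domain with metric balls gives $\vol\widetilde B(\widetilde x,R) \asymp P(R)\cdot V$, where $P(R) = \#\{\gamma : d(\widetilde x,\gamma\widetilde x)\leqslant R\}$, so the volume entropy $h(\G)$ equals the exponential growth rate of $P$. A Milnor--\v{S}varc count bounds $P$ by the word growth of $\pi_1$ relative to a short generating set whose cardinality is controlled by $V$ through the $1$-separation, yielding the key estimate $h(\G) \leqslant C_n\log(C_n' V)$; equivalently $\MinEnt(M) \leqslant h(\G)^n V \leqslant C_n V\log^n(C_n' V)$. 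Choosing the ball radius $\rho \asymp 1/h(\G) \asymp 1/\log V$ is what balances the two competing requirements: small enough that the balls stay $\pi_1$-trivial and the multiplicity $m$ remains under control, yet large enough that $N$ does not exceed $\asymp V\log^n V$. Substituting these estimates into the nerve bound gives $\| M \| \leqslant C_n V\log^n(C_n' V)$, and hence the theorem after $V \to \SR(M)$.

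\emph{Main obstacle.} The delicate feature is the complete absence of curvature hypotheses: balls can be arbitrarily thin (for instance near a contractible neck) and admit no pointwise lower volume bound, so neither the multiplicity $m$ nor the count $N$ is governed by a naive packing argument. The real work is the quantitative covering/packing estimate that simultaneously keeps $m$ bounded and forces the number of $\pi_1$-trivial balls to grow no faster than $V\log^n V$—this is precisely where the logarithmic loss is produced, and it is the heart of Gromov's argument. The essentially equivalent analytic difficulty is the entropy bound $h(\G)\leqslant C_n\log(C_n' V)$: obtaining the correct exponent $n$ on the logarithm (rather than an uncontrolled power) requires the careful interplay, due to Gromov, among the $1$-separation of orbits, the distribution of volume in $\widetilde M$, and the filling radius of $M$.
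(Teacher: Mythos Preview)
The paper does not prove this theorem; it is stated as a known result of Gromov, with references to \cite{gromov_filling_riemannian_manifolds_1983} and \cite{gromov_systoles_notes_1996}, and is then invoked as a black box in the proofs of Theorems~\ref{thm_main} and~\ref{thm_02}. There is consequently no argument in the paper against which to compare your proposal.

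For what it is worth, your outline tracks the shape of Gromov's original argument fairly faithfully: normalize to unit systole, cover by balls small enough to have trivial $\pi_1$-image in $M$, bound $\|M\|$ through the combinatorics of the nerve, and control both the number of balls and the multiplicity via the entropy estimate $h(\G)\leqslant C_n\log(C_n' V)$ at the scale $\rho\asymp 1/\log V$. You also correctly isolate the real difficulty, namely that in the absence of curvature bounds naive packing fails and one must invoke Gromov's more delicate volume-distribution and filling-radius machinery. As written, however, the proposal is a roadmap rather than a proof: the passage from a $\pi_1$-trivial (hence amenable) cover to an explicit \emph{upper bound} on $\|M\|$ via a nerve map---as opposed to the vanishing theorem for amenable covers of multiplicity at most $n$---and the curvature-free derivation of the entropy inequality each require substantial work that is only gestured at here.
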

\begin{remark}
 In \cite[Section 6.4.D.]{gromov_filling_riemannian_manifolds_1983}, there is a typo of missing the exponent $n$ in the logarithm part of (\ref{Gromov}). In literature the estimate (\ref{Gromov}) in Theorem~\ref{thm_Gromov_SR} is also written to be
 \begin{equation*}
  \SR(M) \geqslant C_n \frac{\| M \|}{\log^n{\| M \|}},
 \end{equation*}
 where $C_n$ is a positive constant only depending on $n$, see \cite[Section 4.3.]{balacheff_karam_macroscopic_simplicial_2019}. 
\end{remark} 
Theorem \ref{thm_Gromov_SR} builds a bridge between systolic geometry and hyperbolic geometry. We refer to \cite{guth_metaphors_systolic_10} for more explanations about this interplay. Let $M$ be a hyperbolic manifold with volume $V$. Gromov and Thurston's theorem (see (\ref{sv_hyp})) yields that any triangulation of $M$ has at least $C \cdot V$ number of simplices for some constant $C$. Therefore, Gromov's theorem (Theorem~\ref{thm_Gromov_SR}) implies relation between topological complexity and systolic volume. More explicitly, if a hyperbolic manifold is topologically complicated, the systolic volume also should be sufficiently large. Our results in this paper can be seen as an attempt to interpret this relation with more details.

The main technique used in the proof of Theorem~\ref{thm_main} is the connection between triangulation and volume for hyperbolic manifolds. J{\o}rgensen and Thurston proved that any complete hyperbolic $3$-manifold admits a triangulation with the number of tetrahedra bounded from above by its volume. A detailed proof of this theorem is provided by Kobayashi and Rieck in \cite{kobayashi_rieck_hyperbolic_volume_2011}.

The triangulation complexity of manifolds in higher dimensions is studied in \cite{francaviglia_frigerio_martelli_stable_complexity}. Let $M$ be a closed manifold of dimension $n$. The triangulation complexity of $M$, denoted $\sigma(M)$, is defined to be the minimum number of $n$-simplices in any triangulation of $M$. When $n = 3$, $\sigma(M)$ coincides with $c(M)$.
\begin{theorem} \label{thm_02}
 Let $M$ be a closed hyperbolic manifold of dimension $n$ with $n \geqslant 4$. The triangulation complexity $\sigma(M)$ and systolic volume of $M$ is related by
  \begin{equation*}
   \SR(M) \geqslant D_n \frac{\sigma(M)}{\log^n{\sigma(M)}} ,
  \end{equation*}
  where $D_n$ is a positive constant only depending on $n$.  
 \end{theorem}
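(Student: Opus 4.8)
The plan is to run the same argument that underlies Theorem~\ref{thm_main} but in arbitrary dimension $n\geqslant 4$, using Gromov and Thurston's identity and the higher-dimensional analogue of the Jørgensen–Thurston bound supplied by \cite{francaviglia_frigerio_martelli_stable_complexity}. First I would combine Gromov's Theorem~\ref{thm_Gromov_SR} with the Gromov–Thurston equality (\ref{sv_hyp}): for a closed hyperbolic $n$-manifold $M$ we have $\|M\| = \vol_{\hyp}(M)/\nu_n$, so Theorem~\ref{thm_Gromov_SR} gives
\begin{equation*}
 \vol_{\hyp}(M) \leqslant \nu_n\, C_n\, \SR(M)\,\log^n\!\bigl(C_n'\,\SR(M)\bigr).
\end{equation*}
Equivalently, in the form recorded in the Remark, $\SR(M) \geqslant c_n\,\vol_{\hyp}(M)/\log^n\!\vol_{\hyp}(M)$ for a constant $c_n>0$ depending only on $n$, once $\vol_{\hyp}(M)$ is large enough; for the finitely many manifolds of small volume one absorbs them into the constant.

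The second ingredient is an upper bound on $\sigma(M)$ in terms of $\vol_{\hyp}(M)$. Here I would invoke the main estimate of \cite{francaviglia_frigerio_martelli_stable_complexity}: for closed hyperbolic $n$-manifolds there is a constant $K_n$ with $\sigma(M)\leqslant K_n\,\vol_{\hyp}(M)$ (this is the higher-dimensional counterpart of the Jørgensen–Thurston/Kobayashi–Rieck triangulation bound used for $n=3$). Combining this linear bound with the previous display, and using that $x\mapsto x/\log^n x$ is increasing for $x$ large, yields
\begin{equation*}
 \SR(M) \;\geqslant\; c_n\,\frac{\vol_{\hyp}(M)}{\log^n\!\vol_{\hyp}(M)} \;\geqslant\; c_n\,\frac{\sigma(M)/K_n}{\log^n\!\bigl(\sigma(M)/K_n\bigr)} \;\geqslant\; D_n\,\frac{\sigma(M)}{\log^n\!\sigma(M)}
\end{equation*}
for a suitable $D_n=D_n(n)>0$, after adjusting constants to compare $\log^n(\sigma(M)/K_n)$ with $\log^n\sigma(M)$ and to cover the bounded range of small $\sigma(M)$.

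Two technical points deserve care, and the second is the main obstacle. The minor point is the monotonicity bookkeeping: one must check that $\vol_{\hyp}(M)$ is bounded below (true by the $n$-dimensional Margulis lemma / Wang finiteness, giving a positive lower bound on volumes of closed hyperbolic $n$-manifolds), so that all logarithms are positive and the substitutions are legitimate; the finitely many exceptional low-volume manifolds only affect the constant $D_n$. The genuine obstacle is confirming that \cite{francaviglia_frigerio_martelli_stable_complexity} indeed provides a \emph{linear} upper bound $\sigma(M)\leqslant K_n\vol_{\hyp}(M)$ with a dimensional constant, rather than merely the two-sided comparison between stable complexity and simplicial volume; if only the stable complexity is controlled linearly, one needs the elementary inequality $\sigma(M)\geqslant \|M\|$ together with the reverse bound from that paper, and then the argument goes through with $\sigma$ replaced first by stable complexity and then compared back to $\sigma$. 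Assuming the linear triangulation bound in dimension $n$, the rest is the same two-line chain of inequalities as above.
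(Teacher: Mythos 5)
Your overall strategy coincides with the paper's: chain the Gromov--Thurston proportionality (\ref{sv_hyp}) and Gromov's Theorem~\ref{thm_Gromov_SR} with a linear upper bound $\sigma(M)\leqslant K_n\vol_{\hyp}(M)$, then do the monotonicity bookkeeping. The first and third steps are fine. The genuine gap is exactly the one you flag yourself: the linear triangulation bound in dimension $n\geqslant 4$ is not something you can simply cite from \cite{francaviglia_frigerio_martelli_stable_complexity}. That paper controls the \emph{stable} complexity $\sigma_\infty$ (the infimum of $\sigma(\widetilde M)/d$ over degree-$d$ covers) and, as the paper's own remark records, supplies only the \emph{lower} bound $\|M\|<\sigma(M)$. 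Your proposed fallback is logically reversed: since $\sigma_\infty(M)\leqslant\sigma(M)$, a linear upper bound on the stable complexity gives no upper bound on $\sigma(M)$ itself, so ``replacing $\sigma$ by stable complexity and comparing back'' cannot close the argument --- the comparison goes the wrong way for an upper bound.

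The paper fills this hole by proving the needed estimate directly as Proposition~\ref{hyperbolic_n}: take a maximal $R$-separated set with $R=\inj(M,\hyp)$, form the Voronoi decomposition, bound the number of cells by $\vol_{\hyp}(M)/\vol_{\hyp}(B(x,R/2))$ using the volume growth of hyperbolic balls, bound the number of faces of each cell by a packing argument, and triangulate cell by cell; this yields a triangulation with at most $K_n\vol_{\hyp}(M)$ simplices, hence $\sigma(M)\leqslant K_n\vol_{\hyp}(M)$. (This is the higher-dimensional analogue of the J{\o}rgensen--Thurston/Kobayashi--Rieck argument you allude to, but it must be carried out, not cited; note it also has to contend with the fact that closed hyperbolic $n$-manifolds can have arbitrarily small injectivity radius.) To repair your proof you should either prove such a proposition or locate a reference that genuinely asserts a linear upper bound on the \emph{unstable} triangulation complexity in terms of hyperbolic volume; as written, the key step is missing.
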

\begin{remark}
 It is proved in \cite{francaviglia_frigerio_martelli_stable_complexity} that under assumptions of Theorem \ref{thm_02}, $\| M \| < \sigma(M)$ holds. Hence above theorem is indeed a generalization of Gromov's theorem (Theorem \ref{thm_Gromov_SR}).  
\end{remark}

The embolic volume defined in terms of injectivity radius is another geometric quantitiy representing the topological complexity of manifolds. We refer to \cite[Section 11.2.3.]{berger_riemannian_geometry} for a general description.
\begin{definition} \label{def_emb}
 The embolic volume of $M$, denoted $\emb(M)$, is defined to be
 \begin{equation*}
  \inf_{\G} \frac{\vol_{\G}(M)}{\inj(M, \G)^n},
 \end{equation*}
 where the infimum is taken over all Riemannian metrics $\G$ on $M$. 
\end{definition}
Embolic volume is positive for all compact $n$-manifolds $M$ ($n \geqslant 2$), see \cite[Section 7.2.4.]{berger_riemannian_geometry}. Note that on a Riemannian manifold $(M, \G)$, $\sys \pi_1(M, G) \geqslant 2 \inj(M, \G)$. Hence we always have $\emb(M) \geqslant \SR(M)$. Then for closed manifolds with nonzero simplicial volume, Theorem ~\ref{thm_Gromov_SR} implies that the embolic volume is related to simplicial volume. This result is proved by Katz and Sabourau \cite{katz_sabourau_entropy_systolically_05} by a different approach. Moreover, a direct implication of Theorem \ref{thm_02} is that any closed hyperbolic $n$-manifold with $n \geqslant 4$ has
\begin{equation} \label{emb_tri}
 \emb(M) \geqslant D_n \ts \frac{\sigma(M)}{\log^n{(\sigma(M))}},
\end{equation} 
where $D_n$ is the positive constant in Theorem~\ref{thm_02} and only depending on $n$. In this note, we also show that embolic volume of any closed $n$-manifold is related to its triangulation complexity. Compared with Katz and Sabourau's above result, our result is more general, since it includes all closed manifolds with zero simplicial volume.
\begin{theorem} \label{thm_03}
 Let $M$ be a closed $n$-dimensional manifold. Then there exists a positive constant $E_n$ only depending on $n$, such that
 \begin{equation}
  \emb(M) \geqslant E_n \ts \sqrt{\sigma (M)}.   
 \end{equation} 
\end{theorem}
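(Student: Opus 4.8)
The plan is to derive Theorem~\ref{thm_03} from the purely metric assertion that a closed Riemannian $n$-manifold $(M,\G)$ with $\inj(M,\G)\geqslant 1$ admits a triangulation with at most $C_n\,\vol_{\G}(M)^{2}$ simplices, where $C_n$ depends only on $n$. Granting this, let $\G$ be an arbitrary metric on $M$ and rescale it so that $\inj(M,\G)=1$; then $\sigma(M)\leqslant C_n\,\vol_{\G}(M)^{2}=C_n\bigl(\vol_{\G}(M)/\inj(M,\G)^{n}\bigr)^{2}$, and taking the infimum over all $\G$ gives $\sigma(M)\leqslant C_n\,\emb(M)^{2}$, i.e.\ $\emb(M)\geqslant C_n^{-1/2}\sqrt{\sigma(M)}$, so one may take $E_n=C_n^{-1/2}$.

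To prove the metric assertion, fix $\G$ with $\inj(M,\G)=1$ and put $V=\vol_{\G}(M)$. The first ingredient is the classical curvature-free lower bound for volumes of small balls that already underlies the positivity of embolic volume (Berger, Croke; cf.\ \cite{berger_riemannian_geometry}): there is $c_n>0$ with $\vol_{\G}\bigl(B(p,r)\bigr)\geqslant c_n r^{n}$ for all $p\in M$ and $r\leqslant\tfrac12$. Choose a maximal $\tfrac12$-separated set $\{p_1,\dots,p_N\}\subset M$. The balls $B(p_i,\tfrac14)$ are pairwise disjoint, whence $N\leqslant V/(c_n4^{-n})$, while by maximality the balls $B(p_i,\tfrac12)$ cover $M$. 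Each of these balls has radius strictly less than $\inj(M,\G)=1$, so it is the diffeomorphic image of a Euclidean ball under $\exp_{p_i}$ and is geodesically star-shaped about its centre, hence contractible; this is the structural input that makes the cover workable. The same volume estimate bounds the local density: for each $i$ the number of net points within distance $2$ of $p_i$ is at most $C_n V$, since the corresponding disjoint quarter-balls all lie inside $B(p_i,\tfrac94)$, of volume $\leqslant V$.

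The heart of the matter is to upgrade the net into an honest triangulation of $M$ with few simplices. I would take the net points as vertices and build, say, a Voronoi/Delaunay-type decomposition adapted to the geodesic distance (or, equivalently, a straightened nerve-type complex subordinate to $\{B(p_i,\tfrac12)\}$), and verify — using the star-shapedness of the balls and a good-cover refinement, or a reach estimate for the embedding $x\mapsto\bigl(d(x,p_1),\dots,d(x,p_N)\bigr)$ suitably smoothed — that the resulting complex is a triangulation of $M$. In such a construction the simplices through a fixed vertex $p_i$ are controlled by the Voronoi cells adjacent to that of $p_i$, hence by the local density bound of the previous step; with $N\leqslant C_n V$ vertices and each vertex-star of combinatorial size $O_n(V)$, the total number of simplices is $O_n(V^{2})$, the two factors of $V$ being exactly the vertex count and the local complexity. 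The main obstacle is precisely this step: checking that one really obtains a triangulation of $M$ — links being spheres, or the Euclidean triangulation pulling back correctly — and that the local combinatorics stays linearly bounded, all with no curvature control but only the injectivity-radius bound; everything else is a covering argument followed by counting.

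Finally, it is worth recording how Theorem~\ref{thm_03} fits with the earlier results. When $\|M\|\neq 0$, Theorem~\ref{thm_Gromov_SR} together with $\emb(M)\geqslant\SR(M)$ already yields $\emb(M)\gtrsim_n\|M\|/\log^{n}\|M\|$, which for the manifolds of primary interest (in particular hyperbolic ones, by \eqref{sv_hyp}) is much stronger than $\sqrt{\sigma(M)}$. Thus the genuinely new content of Theorem~\ref{thm_03} is the case of vanishing or small simplicial volume, where the bound is produced entirely from the injectivity-radius/volume geometry through the construction above.
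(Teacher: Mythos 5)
Your proposal follows essentially the same route as the paper's proof: a maximal separated net at scale comparable to the injectivity radius, Voronoi cells whose number is bounded by $C_n\,\vol_{\G}(M)/\inj(M,\G)^n$ via Croke's local volume bound (Lemma~\ref{croke}), a per-cell simplex count bounded by the number of adjacent cells and hence again by $C_n\,\vol_{\G}(M)/\inj(M,\G)^n$, and the product giving $\sigma(M)\leqslant\beta_n\emb(M)^2$. The step you flag as the main obstacle --- verifying that the Voronoi decomposition genuinely yields a compatible triangulation with linearly bounded local combinatorics --- is exactly the step the paper also treats only in outline, so your account matches the paper's level of rigor as well as its strategy.
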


\subsection*{Organization} 
This short note is organized as follows: In Section 2, we discuss J{\o}rgensen and Thurston's theorem of hyperbolic 3-manifolds. Proof of Theorem ~\ref{thm_main} is given in this section. The case of hyperbolic manifolds of high dimensions is contained in Section 3. Then Theorem~\ref{thm_02} is proved. Section 4 concerns embolic volume and triangulation complexity. Proof of Theorem \ref{thm_03} is included in this section.

\subsection*{Acknowledgement} 
This work is supported by the Young Scientists Fund of NSFC (award No. 11901261). Part of this work was done when the author was visiting Tianyuan Mathematical Center in Southwest China. The author wishes to thank Professor Bohui Chen for his invitation and support.

\section{Triangulation and volume of hyperbolic 3-manifolds}

We prove Theorem \ref{thm_main} in this section. The proof is based on J{\o}rgensen and Thurston's theorem of hyperbolic 3-manifolds. J{\o}rgensen and Thurston's work (\cite{thurston_notes_three-manifolds}, also see \cite{kobayashi_rieck_hyperbolic_volume_2011}) implies that triangualtion of a hyperbolic $3$-manifold is related to its volume. 
\begin{theorem}[J{\o}rgensen, Thurston] \label{JT_hyp}
 Let $M$ be a closed hyperbolic $3$-manifold, and $a_0$ be a positive constant.  
 Assume that $\inj(M, \hyp) \geqslant a_0$. Then there exists a triangulation of $M$, with the number $t$ of tetrahedra satisfying
 \begin{equation*}
  t \leqslant K \ts \vol_{\hyp} (M) , 
 \end{equation*}
 where $K$ is a positive constant only depending on $a_0$. 
\end{theorem}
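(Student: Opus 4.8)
The plan is to exploit the uniformly bounded geometry of $(M,\hyp)$ — the sectional curvature is identically $-1$ and, by hypothesis, $\inj(M,\hyp)\geqslant a_0$ — to produce a triangulation whose tetrahedra can be counted by a packing argument. First I would fix $\varepsilon=\varepsilon(a_0)\in(0,a_0/10)$ and choose a maximal $\varepsilon$-separated subset $X=\{x_1,\dots,x_N\}\subset M$. By maximality the balls $B(x_i,\varepsilon)$ cover $M$, while the balls $B(x_i,\varepsilon/2)$ are pairwise disjoint; and since $\varepsilon<a_0$, for any $r<a_0$ each ball $B(x_i,r)$ is isometric to a metric ball of radius $r$ in $\Hb^3$. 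Writing $v(r)$ for the volume of such a ball, disjointness gives $N\,v(\varepsilon/2)\leqslant\vol_{\hyp}(M)$, so $N\leqslant K_1\vol_{\hyp}(M)$ with $K_1=K_1(a_0)$.

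The same estimate controls the local combinatorics: the number of points of $X$ lying within distance $5\varepsilon$ of any given point is at most $D:=v(6\varepsilon)/v(\varepsilon/2)$, since the pairwise disjoint $\varepsilon/2$-balls about those points all sit inside an $\Hb^3$-ball of radius $6\varepsilon$. Thus $D$ depends only on $a_0$. Next I would upgrade the net to an honest triangulation. Because $10\varepsilon<a_0$, each ball $B(x_i,10\varepsilon)$ is isometric to a ball in $\Hb^3$, so in it one can form the Voronoi decomposition of the net points it meets and the dual Delaunay decomposition into geodesically convex cells. Every Delaunay cell has an empty circumscribed ball, and by $\varepsilon$-density of $X$ this ball has radius $\leqslant\varepsilon<a_0$, hence is embedded; therefore the Delaunay condition is purely local, the local decompositions agree on overlaps, and they assemble into a convex-cell decomposition of $M$. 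After a generic perturbation of $X$ by less than $\varepsilon/100$ — which alters the constants above only by bounded factors — the cells become simplices, or split into a bounded number of them. Each cell meets at most $D$ net points, hence has at most a bounded number $D'=D'(a_0)$ of faces and subdivides into at most $D'$ tetrahedra; and the decomposition has at most $K_1\vol_{\hyp}(M)$ cells, since each cell contains a net point. Taking $K=D'K_1$ yields a triangulation of $M$ with $t\leqslant K\,\vol_{\hyp}(M)$ tetrahedra.

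The step I expect to be the main obstacle is this last one: verifying that the locally defined Delaunay decompositions really glue to a cell decomposition of $M$ (and not merely to a complex homotopy equivalent to $M$), and that the general-position perturbation can be executed while keeping every constant dependent on $a_0$ alone. All the metric inputs this requires — convex balls of a definite radius, embeddedness of small circumscribed balls, and the uniform lower bound $v(\varepsilon/2)$ on cell-sized volumes — are supplied by constant curvature together with $\inj(M,\hyp)\geqslant a_0$, so the tetrahedron count is ultimately a packing bound in disguise. (An essentially equivalent route replaces the Delaunay construction by the nerve of the convex cover $\{B(x_i,\varepsilon)\}$, whose simplices number at most $\binom{D}{\,\cdot\,}N\leqslant K_1' \vol_{\hyp}(M)$; the same gluing/realization issue then reappears as the task of showing a sufficiently fine geodesically convex good cover has nerve homeomorphic, not just homotopy equivalent, to $M$.)
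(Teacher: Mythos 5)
Your proposal is correct and follows essentially the same route as the paper: a maximal separated net whose cardinality is controlled by a volume-packing bound, a Voronoi-type decomposition (you pass to the dual Delaunay cells, but the counting is identical), and a uniform bound on the local combinatorics coming from $\inj(M,\hyp)\geqslant a_0$. The gluing/realization issue you flag is precisely the technical point the paper's own outline also leaves to the references (Thurston's notes and Kobayashi--Rieck), so your sketch is at the same level of completeness as the proof given in the paper.
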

\begin{proof}
For completeness, we show an outline of the proof. More details can be found in \cite[Chapter 5]{thurston_notes_three-manifolds}, \cite{kobayashi_rieck_hyperbolic_volume_2011} and also \cite[Section 4.2]{maria_purcell_treewidth_hyperbolic_volume}.

Let $R = \frac{1}{2} \inj(M, \hyp)$. Assume that $X$ is a maximal subset of points in $M$, so that any two of them having distance at least $R$. The set $X$ is maximal under inclusion. Voronoi cell associated to $x_0 \in X$ is defined to be the the subset
 \begin{equation*}
  V(x_0) = \{ y \in M | dist(y, x_0) \leqslant dist(y, x), \, \text{for any} \, x \in X \, \text{and} \, x \neq x_0 \}. 
 \end{equation*}
Since the ball $B(x_0, \frac{R}{2}) \subset V(x_0)$, the total number of Voronoi cells is at most $\frac{\vol (M)}{\vol (B(x_0, \frac{R}{2}))}$, and thus bounded from above by $C(a_0) \vol (M)$, with $C(a_0)$ a constant only depending on $a_0$. After triangulating each Voronoi cell, we get a triangulation of $M$ with the number $t$ of tetrahedra bounded from above by $K \cdot \vol(M)$, where the constant $K$ is a multiplicative number of $C(a_0)$.
\end{proof}

Sabourau proved that there are only finitely many hyperbolic $3$-manifolds with bounded systolic volume. 
\begin{theorem}[see {\cite[Theorem B]{sabourau_systolic_volume_connected_sums}}] \label{Sabourau_07}
 For a sufficiently large positive number $s_0$, there are only finitely many hyperbolic $n$-manifolds ($n \geqslant 3$) $M$ with $\SR(M) \leqslant s_0$. 
\end{theorem}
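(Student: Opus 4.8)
The plan is to deduce finiteness from a bound on hyperbolic volume together with the structure theory of finite-volume hyperbolic $3$-manifolds. First I would note that a bound on systolic volume forces a bound on hyperbolic volume: if $\SR(M)\le s_0$, then $\|M\|=\vol_{\hyp}(M)/\nu_n>0$ by \eqref{sv_hyp}, so Theorem~\ref{thm_Gromov_SR} applies and gives $\|M\|\le C_n\,\SR(M)\,\log^n\!\big(C_n'\,\SR(M)\big)$; since $\SR(M)\le s_0$ and, for $s_0$ large, the function $x\mapsto C_n x\log^n(C_n' x)$ is increasing on the relevant range, we get $\vol_{\hyp}(M)=\nu_n\|M\|\le V_0:=\nu_n C_n s_0\log^n(C_n' s_0)$. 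Hence it is enough to bound the number of hyperbolic $n$-manifolds with $\SR\le s_0$ among those of hyperbolic volume at most $V_0$.

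For $n\ge 4$ this is immediate from Wang's finiteness theorem, which already gives only finitely many hyperbolic $n$-manifolds of volume $\le V_0$. For $n=3$ it is false — there are infinitely many hyperbolic $3$-manifolds of bounded volume — and here I would argue by contradiction using J{\o}rgensen and Thurston's theory of geometric convergence of hyperbolic $3$-manifolds (cf. \cite{thurston_notes_three-manifolds}). If the set of closed hyperbolic $3$-manifolds with $\SR\le s_0$ were infinite, pick pairwise non-isometric such $M_i$; by the above $\vol_{\hyp}(M_i)\le V_0$, so a subsequence converges geometrically to a complete finite-volume hyperbolic $3$-manifold $M_\infty$. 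Since the $M_i$ are pairwise non-isometric, $M_\infty$ cannot be closed — a geometric limit of distinct closed hyperbolic $3$-manifolds on a closed manifold would, by Mostow rigidity, force infinitely many of the $M_i$ to be isometric to it. So $M_\infty$ has a cusp, for large $i$ the manifold $M_i$ is a Dehn filling of $M_\infty$, and its core geodesic $\gamma_i$ has hyperbolic length tending to $0$ while sitting in an embedded Margulis tube $U_i$ of radius $R_i\to\infty$. It then suffices to prove $\SR(M_i)\to\infty$, contradicting $\SR(M_i)\le s_0$.

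Establishing $\SR(M_i)\to\infty$ along such a Dehn-filling sequence is the step I expect to be the main obstacle. The difficulty is that $\SR$ is an infimum over \emph{all} Riemannian metrics, so the collapse of the hyperbolic metric near $\gamma_i$ does not in itself prevent some other metric on $M_i$ from having a small ratio $\vol/\sys^3$; one must turn the degeneration of the deep tube $U_i$ into a lower bound valid for \emph{every} metric on $M_i$. Two plausible routes are: (i) bound $\SR(M_i)$ below by a filling-type invariant attached to the meridian disk of $U_i$, whose hyperbolic area grows with $R_i$ and which should survive a change of metric; or (ii) bound $\SR(M_i)$ below in terms of the order of the torsion subgroup of $H_1(M_i;\Z)$, which is unbounded along the family. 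Making one of these precise is exactly what is carried out in \cite[Theorem B]{sabourau_systolic_volume_connected_sums}, and it completes the argument.
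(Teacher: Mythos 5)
Your reduction of the problem to a hyperbolic volume bound, and the disposal of the case $n\geqslant 4$ via Wang's finiteness theorem combined with Theorem~\ref{thm_Gromov_SR}, is exactly what the paper says in the remark following Theorem~\ref{Sabourau_07}; the paper itself offers no proof beyond that remark and simply cites Sabourau for the substantive case $n=3$. One small point worth making explicit in your first step: to pass from $\SR(M)\leqslant s_0$ to $\vol_{\hyp}(M)\leqslant V_0$ you should note that $\SR(M)$ is also bounded \emph{below} by a positive constant for hyperbolic manifolds (since $\|M\|$ is, by the existence of a minimal hyperbolic volume in each dimension), so that the monotonicity of $x\mapsto C_n x\log^n(C_n'x)$ on the relevant interval is actually available; alternatively just bound $\|M\|$ by the maximum of that function over $(0,s_0]$.

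For $n=3$, however, your argument has a genuine gap, and it is precisely the one you flag yourself. The geometric-convergence setup (bounded volume forces a subsequence converging to a cusped $M_\infty$, with the $M_i$ eventually Dehn fillings of $M_\infty$ whose core geodesics shrink) is the right skeleton, but the conclusion $\SR(M_i)\to\infty$ is not a routine consequence of the hyperbolic metric degenerating: as you note, $\SR$ is an infimum over all metrics, so the collapse of a tube in one particular metric proves nothing by itself. Deferring that step to \cite[Theorem B]{sabourau_systolic_volume_connected_sums} is circular here, since that theorem \emph{is} the statement being proved. Your proposed route (ii) is also not reliable as stated: the torsion of $H_1(M_{p/q};\Z)$ need not be unbounded along every sequence of fillings of a fixed cusped manifold, so one cannot count on a homological lower bound for $\SR$ to blow up. Since the paper treats the $n=3$ case as a black box, your write-up is at the same level of completeness as the paper's own discussion, but it should not be presented as a proof of the $3$-dimensional case.
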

\begin{remark}
When $n \geqslant 4$, Theorem \ref{Sabourau_07} is a direct implication of Wang's finiteness theorem and Theorem \ref{thm_Gromov_SR}. Sabourau showed the case of $n = 3$ with a different approach. When $n = 2$, the theorem is not true, since on each closed orientable surface of genus $g \geqslant 2$, there exist infinitely many pairwise non-isometric hyperbolic metrics.   
\end{remark}

\noindent{\bf Proof of Theorem ~\ref{thm_main}:} \\
Let $M$ be a closed hyperbolic $3$-manifold with $\SR(M) \leqslant s_0$, where $s_0$ is a sufficiently large positive constant. According to Sabourau's theorem (see Theorem~\ref{Sabourau_07}), there are only finitely many closed hyperbolic $3$-manifolds $M$ with $\SR(M) \leqslant s_0$. Then we know that the injectivity radii of all these hyperbolic $3$-manifolds $M$ have a common lower bound. Denote this lower bound by $\delta_0$. The constant $\delta_0$ is only depending on $s_0$. Then Theorem~\ref{JT_hyp} implies there exists a positive constant $K(\delta_0)$ such that $M$ admits a triangulation with the number $t$ of tetrahedra satisfying
\begin{equation*}
 t \leqslant K(\delta_0) \ts \vol_{\hyp}(M).
\end{equation*}

The triangulation complexity $c(M)$ of a closed hyperbolic $3$-manifold $M$ satisfies $c(M) \leqslant t$. Hence we have
\begin{align*}
 c(M) & \leqslant t \\  
         & \leqslant K(\delta_0) \ts \vol_{\hyp}(M) \\
         & \leqslant K(\delta_0) \ts \nu_3 \| M \| \\
         & \leqslant K(\delta_0) \ts \nu_3 C_3 \SR(M) \ts \log^3{\left( C_3^{\prime} \ts \SR(M)  \right)} , 
\end{align*}
where $\nu_3$, $C_3$ and $C_3^{\prime}$ are all fixed positive constants. Therefore,
\begin{equation*}
 \SR(M) \geqslant C(s_0) \ts \frac{c(M)}{\log^3{c(M)}} , 
\end{equation*}
where $C(s_0)$ is a positive constant only depending on $s_0$. 
\hfill $ \square $

\section{Hyperbolic manifolds in higher dimensions}

When $n \geqslant 4$, hyperbolic manifolds of dimension $n$ have more rigidity. There are only finitely many hyperbolic $n$-manifolds ($n \geqslant 4$) with volume bounded from above (Wang's finiteness theorem), but this is not true in $n = 3$. Moreover, for hyperbolic $n$-manifolds with $n \geqslant 4$, if hyperbolic volumes are bounded from above, the injectivity radii have a common lower bound. We generalize J{\o}rgensen and Thurston's theorem to hyperbolic manifolds of dimension at least four. Then we prove Theorem~\ref{thm_02} in this section.

\begin{proposition} \label{hyperbolic_n}
 Let $M$ be a closed hyperbolic manifold of dimension $n$, with $n \geqslant 4$. There exists a positive constant $K_n$ depending only on $n$, such that the manifold $M$ admits a triangulation with the number $t$ of $n$-simplices is bounded from above by its volume as follows,
 \begin{equation} \label{tri_n}
  t \leqslant K_n \ts \vol_{\hyp}(M).
 \end{equation}
\end{proposition}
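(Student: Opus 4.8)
The plan is to re-run the Voronoi-cell construction from the proof of Theorem~\ref{JT_hyp}, but now arrange that the resulting constant does not deteriorate as the injectivity radius shrinks. The two inputs that make this possible in dimension $n\geqslant 4$ are Wang's finiteness theorem and the thick--thin decomposition at the Margulis constant $\varepsilon_n=\varepsilon_n(n)$: for a \emph{closed} manifold the $\varepsilon_n$-thin part is a disjoint union of Margulis tubes $T_1,\dots,T_k$ around short closed geodesics $\gamma_i$ of length $\ell_i<\varepsilon_n$, and by the Margulis lemma each satisfies $\vol_{\hyp}(T_i)\geqslant v_n>0$ for a universal $v_n$, so $k\leqslant v_n^{-1}\,\vol_{\hyp}(M)$.

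I would then triangulate the thick part and the tubes separately. On $M_{\geqslant\varepsilon_n}$ one has $\inj\geqslant\varepsilon_n/2$, so a maximal $(\varepsilon_n/4)$-separated net produces Voronoi cells each containing a ball of a fixed radius (hence of volume $\geqslant v_n'(n)$) and with at most $N_n=N_n(n)$ faces (the net points in a fixed-radius ball number at most $N_n$); triangulating each cell with $O_n(1)$ simplices gives a triangulation of $M_{\geqslant\varepsilon_n}$ with $O_n(\vol_{\hyp}(M))$ simplices, which induces on $\partial M_{\geqslant\varepsilon_n}=\bigsqcup_i\partial T_i$ a triangulation with $O_n\!\big(\sum_i\vol_{n-1}(\partial T_i)\big)$ faces. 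Each tube $T_i$ is topologically a twisted $S^1\times D^{n-1}$; I would fill it by coning the chosen triangulation of $\partial T_i$ geodesically onto the core $\gamma_i$ and subdividing each radial cell into $O_n(1)$ simplices, contributing $O_n\!\big(\vol_{n-1}(\partial T_i)\big)$ simplices. The volume bookkeeping then closes cleanly: $\ell_i<\varepsilon_n$ together with $\vol_{\hyp}(T_i)\geqslant v_n$ forces the tube radius $r_i\geqslant r_0(n)>0$, and the Fermi-coordinate identity $\vol_{n-1}(\partial T_i)=(n-1)\coth(r_i)\,\vol_{\hyp}(T_i)\leqslant (n-1)\coth(r_0)\,\vol_{\hyp}(T_i)$, summed over the disjoint tubes, gives $\sum_i\vol_{n-1}(\partial T_i)=O_n(\vol_{\hyp}(M))$; hence $t\leqslant O_n(\vol_{\hyp}(M))$.

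The main obstacle is making the interior Voronoi triangulation of $M_{\geqslant\varepsilon_n}$ compatible with the triangulations of the hypersurfaces $\partial T_i$ and then extending over the $T_i$, all within the stated simplex budget. This is precisely the step that fails in dimension three: hyperbolic Dehn surgery produces, with bounded volume, closed manifolds whose gluing along a short-geodesic torus is arbitrarily complicated, so no universal constant can exist there. Thus Wang's finiteness theorem must be invoked twice. First, fix a threshold $V_0=V_0(n)$; the closed hyperbolic $n$-manifolds with $\vol_{\hyp}\leqslant V_0$ form a finite list $N_1,\dots,N_m$, and taking $A=\max_j(\text{number of simplices in some triangulation of }N_j)$ together with $\vol_{\hyp}(N_j)\geqslant v_n$ yields $t\leqslant A\leqslant (A/v_n)\,\vol_{\hyp}(M)$ in this regime. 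Second, for $\vol_{\hyp}(M)>V_0$ one uses that there is no infinite family of bounded-volume hyperbolic $n$-manifolds to conclude that the gluing data along the $\partial T_i$ is combinatorially bounded in terms of $n$ alone, so that the construction above goes through with constants depending only on $n$. Carrying out this compatibility argument rigorously, and extracting the explicit dimensional constants, is the technical heart of the proposition; the volume estimate of the second paragraph is then purely formal.
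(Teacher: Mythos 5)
Your route is genuinely different from the paper's. The paper's proof is a one-step Voronoi argument run on all of $M$ with $R=\inj(M,\hyp)$: it bounds the number of cells by $\vol_{\hyp}(M)/\vol_{\hyp}(B(x,R/2))$ and then asserts the uniform lower bound $\vol_{\hyp}(B(x,R/2))\geqslant c_1(n)\ts e^{(n-1)R/2}\geqslant c_1(n)$, with no thick--thin decomposition, no Margulis tubes, and no use of Wang's theorem inside the proof. You have correctly put your finger on the delicate point that this glosses over: when the injectivity radius is small, an embedded ball of radius $R/2$ has volume comparable to $(R/2)^n$, and by Belolipetsky--Thomson the injectivity radius of closed hyperbolic $n$-manifolds really can be arbitrarily small for every $n\geqslant 4$, so the constant in the naive count degenerates. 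Splitting off the thin part is the right instinct, and your treatment of the thick part and your volume bookkeeping for the tubes (the $\coth(r_i)$ identity, the lower bound on tube volume, the count $k\leqslant v_n^{-1}\vol_{\hyp}(M)$) are all sound.

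However, the argument has a genuine gap exactly where you locate its ``technical heart,'' and the tool you propose to close it does not do the job. First, the second invocation of Wang's finiteness theorem proves nothing: finiteness of the set of closed hyperbolic $n$-manifolds of volume at most $V$ yields constants depending on $V$, and for $\vol_{\hyp}(M)>V_0$ it gives no control at all over ``the gluing data along the $\partial T_i$''; the phrase ``there is no infinite family of bounded-volume hyperbolic $n$-manifolds'' is not a statement about manifolds of large volume, so the claim that the construction ``goes through with constants depending only on $n$'' is unsupported. Second, the coning step is not well defined as described: the nearest-point projection $\partial T_i\to\gamma_i$ contracts by a factor about $1/\cosh(r_i)$, and since the shortest essential loop on $\partial T_i$ (of length about $\varepsilon_n$) may be freely homotopic to a high power of the core when the rotational holonomy in $O(n-1)$ is nearly periodic, a single boundary simplex of diameter $\varepsilon_n$ can project onto an arc that wraps around the (arbitrarily short) core many times; the resulting ``radial cells'' are then not embedded balls and cannot each be subdivided into $O_n(1)$ simplices. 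Closing either gap requires a genuinely new input --- some uniform control, in dimension $n\geqslant 4$, of the holonomy and combinatorics of Margulis tubes that is independent of the total volume --- and that input is supplied neither by your sketch nor by Wang's theorem.
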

\begin{proof}
Let $(M, \hyp)$ be a closed hyperbolic manifold of dimension $n \geqslant 4$. Set $R = \inj(M, \hyp)$. Suppose that $X \subset M$ is a maximal set of points with any two of them having distance at least $R$. We consider Voronoi cells corresponding to points in $X$. Recall that a Voronoi cell associated to $x \in X$ is
\begin{equation*}
 V(x) = \{ y \in M | dist(x, y) \leqslant dist(x^{\prime}, y), \text{for any} \, x^{\prime} \in X \, \text{and} \, x^{\prime} \neq x \}. 
\end{equation*} 
For any two distinct points $p, q \in X$, $B(p, \frac{R}{2}) \cap B(q, \frac{R}{2}) = \emptyset $ since $dist(p, q) \geqslant R$. The total number of Voronoi cells is thus bounded from by above by
 \begin{align*}
  \frac{\vol_{\hyp}(M)}{\vol_{\hyp}(B(x, \frac{R}{2}))} &  = \frac{1}{\vol_{\hyp}(B(x, \frac{R}{2}))} \ts \vol_{\hyp} (M) \\
   & \leqslant \frac{1}{c_1 (n) e^{(n-1)R/2}} \ts \vol_{\hyp}(M) \\
   & \leqslant \frac{1}{c_1 (n)} \ts \vol_{\hyp}(M), 
 \end{align*}
 where $c_1 (n)$ is a positive constant depending only on $n$. Note that it is proved in \cite{belolipetsky_thomson_systoles_hyperbolic} there exist hyperbolic $n$-manifolds $M$ with arbitrarily small injectivity radius $R$. Hence the above constant $\frac{1}{c_1(n)}$ cannot be further improved.

A triangulation of the hyperbolic manifold $M$ is obtained by triangulating each Voronoi cell $V(x)$ into $n$-simplices. Since two Voronoi cells $V(x)$ and $V(y)$ have a common face if and only if $B(y, R) \subset B(x, \frac{5}{2} R)$, the number of faces of any Voronoi cell $V(x)$ has upper bound of $\frac{\vol_{\hyp}(B(x, \frac{5}{2}R))}{\vol_{\hyp}(B(x, \frac{R}{2}))}$. Hence, the total number of faces for all Voronoi cells is finite and only depending on $n$. In the triangulation, maximal number of $n$-simplices in each Voronoi cell is uniformly bounded from above by the number of its faces. Therefore, there exists a positive constant $K_n$ only depending on $n$, such that the total number of $n$-simplices in the triangulation is bounded from above by $ K_n \ts \vol_{\hyp} (M)$. 
\end{proof}

Now we apply Proposition~\ref{hyperbolic_n} to Proof Theorem~\ref{thm_02}. \\
\noindent{\bf Proof of Theorem \ref{thm_02}:} 
Theorem \ref{thm_02} is proved by using Proposition \ref{hyperbolic_n} and Gromov's theorem (Theorem \ref{thm_Gromov_SR}). Let $M$ be a closed hyperbolic manifold with dimension at least $4$, and $t$ be the number of $n$-simplices of triangulation obtained from Proposition \ref{hyperbolic_n}. Then the triangulation complexity $\sigma(M)$ is bounded from above as follows, 
 \begin{align*}
  \sigma(M) & \leqslant t \\
   & \leqslant K_n \ts \vol_{\hyp} (M) \\ 
   & = K_n \ts \nu_n \| M \| \\
   & \leqslant K_n \nu_n \ts C_n \SR(M) \log^n{\left( C_n^{\prime} \SR(M) \right)} .
 \end{align*}
 Hence, 
 \begin{align*}
  \SR(M) \geqslant D_n \ts \frac{\sigma(M)}{\log^n{\sigma(M) } } ,
 \end{align*} 
 where $D_n$ is a positive constant only depending on $n$. 
$\hfill \square$

\section{Embolic volume of compact manifolds}

We generalize the above results for systolic volume to embolic volume. The proof of Theorem~\ref{thm_03} is given in this section.

Embolic volume is defined in Definition \ref{def_emb}. Berger's embolic inequality (\cite{berger_embolic} or {\cite[Section 7.2.4.]{berger_riemannian_geometry}}) states that for any Riemannian metric $\G$ defined on a compact manifold of dimension $n$, 
\[ \inj(M, \G)^n \leqslant C \ts \vol_{\G}(M) \]
holds, where $C$ is a positive constant. Hence the embolic volume of compact $n$-manifolds is always positive. The relation between embolic volume and other topological invariants is described in \cite[Section 11.2.3.]{berger_riemannian_geometry}. On any closed manifold $M$, we always have $\emb(M) \geqslant \SR(M)$. Hence Gromov's theorem (Theorem \ref{thm_Gromov_SR}) includes a relation between embolic volume and simplicial volume for closed manifolds with nonzero simplicial volume. Moreover, Katz and Sabourau \cite{katz_sabourau_entropy_systolically_05} used a different method to show this result. Then the estimates in Theorem ~\ref{thm_main} and ~\ref{thm_02} also hold if the systolic volume is replaced by embolic volume.

Let $E$ be a sufficiently large positive constant. In \cite{yamaguchi_homotopy_type_finiteness, grove_petersen_wu_geometric_finiteness, grove_petersen_wu_geometric_finiteness_erratum} the finiteness theorems are proved for compact $n$-manifolds $M$ with $\emb(M) \leqslant E$. Therefore, from this point of view, the embolic volume of compact manifolds functions like volume of hyperbolic manifolds. Our theorem in this section provides more evidence to this viewpoint.

The following local embolic inequality of Croke will be used in the proof of Theorem \ref{thm_03}.
\begin{lemma}[see Croke \cite{croke_isoperimetric_80}] \label{croke}
For a Riemannian metric $\G$ defined on compact $n$-dimensional manifold $M$, any metric ball $B(p, r)$ with center $p$ and radius $r \leqslant \frac{1}{2}\inj(M, \G)$ satisfies
\begin{equation} \label{emb_loc}
  \vol(B(x, r)) \geqslant \alpha_n r^n ,
\end{equation} 
 where $\alpha_n$ is a positive constant only depending on $n$. 
\end{lemma}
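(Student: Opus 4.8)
The plan is to compute in geodesic polar coordinates centred at $p$ and to extract the lower bound from an integral-geometric comparison rather than a pointwise one, because the sole hypothesis available is $r \leqslant \frac{1}{2}\inj(M,\G)$: there is no bound on the curvature, so none of the classical Rauch, Bishop or G\"unther comparison estimates can be applied directly.

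First I would exploit the injectivity radius hypothesis. If $r \leqslant \frac{1}{2}\inj(M,\G)$, then any two points $q, q' \in B(p,r)$ satisfy $d(q,q') < \inj(M,\G)$, so they are joined by a unique minimizing geodesic, whose length is smaller than the injectivity radius and which therefore carries no conjugate points. In particular $\exp_p$ restricts to a diffeomorphism of the Euclidean ball $B(0,r) \subset T_pM$ onto $B(p,r)$, the Jacobian density $J(t,v) = |\det (d\exp_p)_{tv}|$ is strictly positive on this range, and the Gauss lemma gives $d(p, \exp_p(tv)) = t$. Writing $U_pM$ for the unit sphere of $T_pM$ with its standard measure $d\sigma$, I then record
\begin{equation*}
 \vol(B(p,r)) = \int_{U_pM} \int_0^r J(t,v)\, dt\, d\sigma(v).
\end{equation*}

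The heart of the matter is a lower bound for this integral. A pointwise estimate of the form $J(t,v) \geqslant c\, t^{n-1}$ is simply false without an upper curvature bound, so the bound must be obtained in the aggregate. Here the engine is the Berger--Kazdan inequality together with an integral-geometric identity for the ball. Concretely, one parametrizes the set of minimizing geodesic chords joining pairs of points of $B(p,r)$ by their initial point, initial direction and length, computes the measure of $B(p,r) \times B(p,r)$ in these coordinates, and applies the Berger--Kazdan inequality --- valid precisely because every such chord has length $< \inj(M,\G)$ and hence no conjugate points --- to bound the resulting Jacobian integral below by its spherical-model value. Since every chord has length at most $2r$, this comparison yields a genuine lower bound $\vol(B(p,r)) \geqslant \alpha_n r^n$, with $\alpha_n$ a positive constant absorbing all the dimensional factors.

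The main obstacle is exactly this aggregate step: without a curvature hypothesis the Jacobian can degenerate as $t \to r$, so the lower bound cannot be localized and must be squeezed out of a global integral identity. Making the Berger--Kazdan comparison precise --- in particular identifying the correct weight in the change of variables for the chord parametrization and controlling the contribution of near-tangential directions --- is the delicate part of the argument; once that integrated inequality is in hand, the stated estimate follows by comparison with the Euclidean model. As only the existence of a positive $\alpha_n$ is needed for Lemma~\ref{croke}, one could replace the sharp Berger--Kazdan step by a cruder integrated bound, but the structural point --- trading the unavailable pointwise curvature comparison for an integral-geometric one --- cannot be avoided.
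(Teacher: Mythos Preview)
The paper does not prove this lemma at all: it is stated with attribution to Croke \cite{croke_isoperimetric_80} and used as a black box in the proof of Theorem~\ref{thm_03}. So there is no in-paper argument to compare against.

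That said, your sketch is a faithful outline of Croke's own proof. You correctly identify why the hypothesis $r \leqslant \tfrac{1}{2}\inj(M,\G)$ does not by itself yield a pointwise Jacobian lower bound (no curvature control), and that the substitute is the Berger--Kazdan inequality applied along conjugate-point-free geodesic chords, combined with a Santal\'o-type integral-geometric identity on $B(p,r)$. This is exactly the mechanism in \cite{croke_isoperimetric_80}, and your remark that only the existence of some positive $\alpha_n$ is needed here is well taken. If you want to tighten the exposition, the step you flag as ``delicate'' --- the change-of-variables weight in the chord parametrization --- is precisely Santal\'o's formula, and naming it would make the outline self-contained.
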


\noindent{\bf Proof of Theorem \ref{thm_03}:}
For a Riemannian metric $\G$ defined on $M$, let $\inj(M, \G)$ be injectivity radius. The distance function induced by $\G$ is denoted $dist_{\G}( , )$. Assume that $R = \frac{1}{5} \inj(M, \G)$. We say a subset $A \subset M$ is $R$-separated if $dist_{\G}(x, y) \geqslant R$ for all distinct $x, y \in A$. Now let $S$ be a maximal $R$-separated subset of $M$. Here the maximal is under inclusion relation. For $x_0 \in S$, denote by $V(x_0)$ the following Voronoi cell,
 \begin{equation*}
  V(x_0) = \{ y \in S | dist(y, x_0) \leqslant dist(y, x), \forall x \in S \, \text{and} \, x \neq x_0  \} . 
 \end{equation*}
 We have $ B \left(x_0, \frac{R}{2} \right) \subset V(x_0) $. Hence the number of Voronoi cells $V(x_0)$ in $M$ is bounded from above by 
 \[ \frac{\vol_{\G}(M)}{\vol_{\G} (B(x_0, \frac{R}{2})) } .  \]

We obtain a triangulation of $M$ by triangulating each Voronoi cell $V(x)$ into $n$-simplices. In order to let the triangulations on each face of $V(x)$ match, we choose the triangulation of $M$ which induces a triangulation on each face symmetric with respect to combinatorial isomorphisms of the face. The combinatorial types of Voronoi cell $V(x)$ are determined by $R$ and $n$. Hence there are only finitely many such combinatorial types. We let $T$ be the maximal number of simplices in all Voronoi cells $V(x)$. Then the finiteness of combinatorial types of $V(x)$ implies that $T$ is a constant only depending on $R$ and $n$. In fact, two Voronoi cells $V(x)$ and $V(y)$ are adjacent, if and only if $B(x, \frac{R}{2}) \subset B(y, \frac{5R}{2})$. Therefore, the number $T$ has upper bound $\frac{\vol(B(y, \frac{5R}{2}))}{\inf_{x} \vol(B(x, \frac{R}{2}))}$, which is bounded from above by $C_n \emb(M)$ according to Lemma \ref{croke}, where $C_n$ is a positive constant only depending on $n$, and the infimum is taken over all $x$ such that $B(x, \frac{R}{2}) \subset B(x, \frac{5}{2}R)$ holds. The number of $n$-simplices in the triangulation is equal to   
\begin{equation*}
 \frac{\vol_{\G}(M)}{\vol_{\G} (B(x_0, \frac{R}{2})) } \ts T . 
\end{equation*}
By using Croke's local embolic inequality (\ref{emb_loc}) again, we have
\begin{align*}
 \frac{\vol_{\G}(M)}{\vol_{\G} (B(x_0, \frac{R}{2})) } \ts T & \leqslant  \frac{2^n}{\alpha_n^n} \frac{\vol_{\G}(M)}{R^n} \ts T \\
 & \leqslant \beta_n \emb(M, \G)^2 , 
\end{align*}
where $\beta_n$ is a positive constant only depending on $n$. 
Therefore we find a triangulation on $M$ with the number of $n$-simplices bounded from above by $\beta_n \emb(M)^2$. 
$ \hfill \square $

\bibliographystyle{amsalpha}
\bibliography{complexity}
 
\end{document}